\theoremstyle{plain}
\newtheorem{theorem}{Theorem}[section]
\newtheorem*{conjecture*}{Conjecture}
\theoremstyle{definition}
\theoremstyle{remark}
\newtheorem*{remark}{Remark}
\numberwithin{equation}{section}
\newcommand{\N}{\mathbb N}
\newcommand{\Z}{\mathbb Z}
\newcommand{\C}{\mathbb C}
\def\({\left(}
\def\){\right)}
\newcommand{\abs}[1]{\left|#1\right|}
\def\k2{\frac{k}{2}}
\begin{document}

\title[Partial and Mock $q$-hypergeometric series]{
Partial theta functions and mock modular forms as $q$-hypergeometric series}

\dedicatory{Dedicated to the visionary Ramanujan, on the 125th anniversary of his birth.}

\author{Kathrin Bringmann}
\address{Mathematical Institute\\University of
Cologne\\ Weyertal 86-90 \\ 50931 Cologne \\Germany}
\email{kbringma@math.uni-koeln.de}

\author{Amanda Folsom}
\address{Department of Mathematics\\ Yale University\\ New Haven, CT 06520-8283 U.S.A.}
\email{amanda.folsom@yale.edu}

\author{Robert C. Rhoades}
\address{Department of Mathematics \\Stanford University \\ Stanford, CA 94305 \\ U.S.A. }
\email{rhoades@math.stanford.edu}

\subjclass[2010] {33D15, 11F37, 11F27}

\thispagestyle{empty} ${\vspace{-2.5in}}$

\keywords{Ramanujan; modular form; partial theta function; mock theta function; $q$-hypergeometric series}

\begin{abstract}
Ramanujan studied the analytic properties of many $q$-hypergeometric series.  Of those, mock theta functions have been particularly  intriguing, and by work of Zwegers, we now know how these curious $q$-series fit into the theory of automorphic forms.  The analytic theory of partial theta functions however, which have $q$-expansions resembling modular theta functions, is not well understood.  Here we consider families of $q$-hypergeometric series which converge in two disjoint domains. In one domain, we show that these series are often equal to one another, and define mock theta functions, including the classical mock theta functions of Ramanujan, as well as certain combinatorial generating functions,
as special cases.  In the other domain, we prove that these series are typically not equal to one another, but instead are related by partial theta functions.
\end{abstract}

\maketitle
\section{Introduction}\label{sec:curiousid}
  \subsection{Curious $q$-series}\label{curioussec}
   Partial theta functions have played a curious role in the theory of partitions (see for instance
\cite{alladi1, alladi2, bky}).
For example, Rogers proved the following identity for $|q|<1$:
\begin{equation}
\label{eqn:rogersExample} \sum_{n\geq 0}\frac{(-1)^n q^{\frac{n(n+1)}{2}}}{(-q)_n} =
\sum_{n\ge 0} \(\frac{-12}{n}\) q^{\frac{n^2 -1}{24}}=:\psi(q),
\end{equation}
where $(a)_n=(a; q)_n:=\prod_{j=0}^{n-1}\left(1-aq^j\right)$, and $\left(\frac{\cdot}{\cdot}\right)$ denotes the usual Kronecker symbol.  Andrews \cite{andrewsPartitions} provided a partition theoretic interpretation, giving a relationship between those partitions
into distinct parts and those with odd largest part and even largest part.
The series defined by $\psi(q)$ in (\ref{eqn:rogersExample}) is an example of a partial theta function, meaning that while the series expansion resembles that of a theta function, the sum runs only over half of a lattice instead of a full one.
Despite a closeness in the $q$-series expansions of partial theta functions and usual theta functions, which are modular forms,
the analytic theory of partial theta functions is not well understood.

Rogers's identity in (\ref{eqn:rogersExample}) is by no means an isolated example of this type.  There are many instances in the literature of $q$-hypergeometric series that are combinatorial generating functions, partial theta functions, mock theta functions, or modular forms.  For example, with $\abs{q}<1$, we have the combinatorial $q$-hypergeometric series \cite{atkinswinnerton}
\begin{align}\label{rankg}
R(w;q):= 1+ \sum_{n\geq 1}
\frac{q^{n^2}}{(wq)_n(w^{-1}q)_n}
= 1+\sum_{n \geq 1} \sum_{m \in \Z} R(m,n) w^mq^n,
\end{align}
where $R(m, n)$ denotes the number of integer partitions of $n$ with rank $m$,
and Dyson's \emph{rank} \cite{Dy44} of a partition equals the largest part minus its number of parts.
A well known theorem, using Durfee squares, gives that $R(1;q)=q^{1/24}/\eta(\tau)$ $(q=e^{2\pi i\tau}, \tau \in \mathbb H)$ is a modular form of weight $-1/2$.  Here $\eta(\tau):=q^{\frac{1}{24}}(q)_\infty$
is Dedekind's eta-function.
 Moreover, setting $w=-1$ in \eqref{rankg},
one obtains $f(q)$, Ramanujan's third order mock theta
function which he defined in his last letter to Hardy as
\begin{align}\label{eqn:fhypergeometric1}
f(q) := 1 + \sum_{n\geq 1} \frac{q^{n^2}}{(-q)_n^2},
\end{align}
which, in analogy to Rogers's partial theta function identity \eqref{eqn:rogersExample}, is a $q$-hypergeometric series admitting a combinatorial interpretation of $f$ as the generating function for the difference between the number of integer partitions with an even number of parts and those with an odd number of parts.
Thanks to the breakthrough of the 2002 doctoral thesis of Zwegers \cite{zwegers}, we now finally understand
how the mock theta functions fit into the theory of modular forms.
Loosely speaking, Zwegers showed that the mock theta functions satisfy modular transformation laws after they are ``completed" by the addition of certain non-holomorphic integrals.   Zwegers's discovery led to the development of a more overarching theory of {\emph{harmonic weak Maass forms}}, which are certain nonholomorphic modular forms.  Many advances have been made in the last decade in this area, notably by the first author and Ono, and their collaborators.   (See for instance, the surveys of Ono \cite{O}
and Zagier \cite{zagierBourbaki}
and the many references therein.)
On the other hand, while there are tantalizing hints which suggest a connection between certain partial theta functions and
Ramanujan's mock theta functions, the analytic theory of the partial theta functions is not well understood.  See, for instance, the works of Hikami, Lawrence-Zagier, and Zagier
\cite{hikami,lz,zagierStrange} on topological invariants of 3-manifolds.
In this paper, we seek to understand further curiosities evoked by partial theta functions and mock theta functions.
In particular, we study relationships between these functions inside \emph{and}
outside the unit disc $\mathbb D:= \{q \in \mathbb C : \ |q|<1\}$.
We first  illustrate this with a motivating example.

%%%%%%%%%%%%%%%
%%   EXAMPLE
%%%%%%%%%%%%%%%
\subsection{An example}\label{examplesec}  Consider the following $q$-hypergeometric series, defined for $|q|<1$ by
\begin{align}\label{fmock2} f^\star(q) &:= 1+\sum_{n\geq 1}\frac{(-1)^{n+1}q^n}{(-q)_n}.
\end{align}
It is known (see (26.22) of \cite{fine} for example)
that inside the unit disc $f^\star(q)$ equals Ramanujan's mock theta function $f(q)$, defined in \eqref{eqn:fhypergeometric1}.   On the other hand, $f^\star(q)$ also converges outside of the unit disc.  For
 $\abs{q}<1$ we have, using \eqref{randq} below (or equation (4.16) of \cite{hikami}), as well as (4.20) of \cite{hikami}
  \begin{align}\label{fstarid}f^\star\left(q^{-1}\right) &=
1\!-\!\!\!
\sum_{n\geq 1} (-1)^n \frac{q^{\frac{n(n-1)}{2}}}{(-q)_n}
 = 2\psi(q).
\end{align}

Note that the function $f^*(q)$ defined in \cite{hikami} is not equal to the function $f^\star(q)$ defined in (\ref{fmock2}). Continuing with this example, the $q$-hypergeometric series
$f(q)$ from \eqref{eqn:fhypergeometric1} also converges outside of the unit disc.  In light of \eqref{fstarid}, one might expect $f\left(q^{-1}\right)$ to equal $2\psi(q)$ (for $|q|<1$), but this is not the case. The difference between $f\left(q^{-1}\right)$ and the expected function $2\psi(q)$ is the series
\begin{equation}\label{spor}
S(q) := \frac{1}{(-q)_\infty^{2}}
 \sum_{n\ge 0} (-1)^n q^{\frac{n(n+1)}{2}},
\end{equation}
that is the quotient of a partial theta function and a modular form.  This fact is due to an identity found in Ramanujan's ``lost" notebook (given in \cite{andrewsLost}, valid for $|q|<1$):
$$f\left(q^{-1}\right)= 2\psi(q) - S(q).$$
We note that this identity may also be derived from \eqref{eqn:andrewsPartial} using \eqref{partialident}.

Consider now the series
\begin{align}
\label{phiq} \phi(q) &:= \sum_{n\ge 0} \frac{ q^{n^2}}{(-q^2;q^2)_n}.
\end{align}
Again this series converges inside and outside the unit disc.
In the region $\abs{q}<1$ it is one of Ramanujan's original (third order) mock theta functions.
 In his ``lost" notebook (see \cite{andrewsBerndt} p. 235), Ramanujan showed that for $|q|<1$
 $$
 \phi\left(-q^{-1}\right)=\psi(q).
 $$
 Hence, by \eqref{fstarid}, as before we might expect that the two third order mock theta functions
 satisfy $\phi(q) = \frac{1}{2}f(-q)$.
 However, this is again not the case.
 Interestingly, we know that the mock theta functions $\phi(q)$ and $f(-q)$ are related via
 the Mock Theta Conjectures (see for instance (3.12) of \cite{gmSurvey}). Namely
 $$2\phi(q) = f(-q) + T(q), $$
 where
 \begin{align}\label{mockphif} T(q) :=  \frac{(q^2;q^2)_\infty^7}{(q)_\infty^3 (q^4;q^4)_\infty^3}.\end{align}
The function $T(q)$  is, up to a power of $q$, a modular form.
We summarize the above discussion with the following diagram $(|q|<1)$:
\begin{equation}\label{diagramfq}
 \boxed{\begin{array}{llllllll} {f(q)} &&= &  f^{\star}(q) &  = & 2\phi(-q) - T(-q) & &  \text{\emph{mock theta}}
  \\ &&&&&&& \\
 \ \ \Big \downarrow &  & & \ \ \Big \downarrow &   & \ \ \Big \downarrow &&
  \\ &&&&&&& \\
f\left(q^{-1}\right)  + S(q) & & = & f^\star\left(q^{-1}\right)   & = & 2\phi\left(-q^{-1}\right) & & \text{\emph{partial theta}}
\end{array}}\end{equation}
That is, we have three different
$q$-hypergeometric series $f(q)$, $f^{\star}(q)$, and $\phi(q)$ (that are mock theta functions) which
define the same function inside $\mathbb D$, but different functions
outside $\mathbb D$.  The functions outside $\mathbb D$ include the same partial theta function $\psi(q)$,
and their difference  $S(q)$ is defined by a partial theta function and a modular form.
\begin{remark} Partial theta functions arise naturally in many $q$-series identities, and are often written in a slightly different form from that of $\psi(q)$ in (\ref{eqn:rogersExample}).  For example, it is not difficult to see that
$$\psi(q) = \sum_{n\geq 0} q^\frac{n(3n+1)}{2} \left(1-q^{2n+1}\right) = \sum_{n\geq 0} q^{\frac{n(3n+1)}{2}} - \sum_{n\leq -1} q^{\frac{n(3n+1)}{2}}.$$  From this perspective, such series are often called {\emph{false theta functions}} (see \cite{andrewsPartitions}).
\end{remark}

%%%%%%%%%%%%%
%%  FAMILIES
%%%%%%%%%%%%%
\subsection{$q$-hypergeometric families and mock theta functions}

In this paper, we study  families of $q$-hypergeometric series inside and outside $\mathbb D$,
that include the classical mock theta functions of Ramanujan, as well as certain combinatorial generating functions,
as special cases.  We show that in more generality, curiosities as depicted by (\ref{diagramfq}) also hold for the aforementioned families of functions.
 Throughout we denote
  \[
  (a_1, \ldots, a_r)_n=(a_1, \ldots, a_r; q)_n:=(a_1)_n\cdot\ldots\cdot(a_r)_n.
  \]

When $|q|<1$, we
consider the following families of $q$-hypergeometric series:
\begin{align}
g_2(w;q)&:=\sum_{n\geq 0} \frac{(-q)_n q^{\frac{n(n+1)}{2}}}{(w, w^{-1}q)_{n+1}},
 \label{eqn:universal2} \\
g_3(w;q)&:=\sum_{n\geq 0} \frac{q^{n(n+1)}}{(w, w^{-1}q)_{n+1}},
\label{eqn:universal3} \\
K(w;q) &:= \sum_{n\ge 0} \frac{(-1)^n q^{n^2}(q;q^2)_n}{(wq^2, w^{-1}q^2; q^2)_n},
\label{eqn:universalK}
\end{align}
which define functions (in certain ranges of $w$ which we will specify later) for both $\abs{q}<1$ and $\abs{q}>1$.
Hickerson, and later Gordon and McIntosh \cite{gmSurvey,hickerson1,hickerson2}, noticed that all of Ramanujan's original mock theta
functions can be written in terms of $g_2(w;q)$ and $g_3(w;q)$ for $\abs{q}<1$.
Hence, these functions are called \emph{universal mock theta functions}.
The function $K(w;q)$ appears in Ramanujan's ``lost" notebook \cite{andrewsLost}, and is related by modular transformation to the universal mock theta function (\ref{eqn:universal2}) \cite{mcintosh2}.  The families of $q$-hypergeometric series discussed here are essentially ``mock Jacobi forms", which first
occurred in Zwegers's thesis \cite{zwegers} and were systematically studied by the first author and Richter \cite{BR}.
For the purpose of this paper, we will not require this perspective.

The remainder of the paper is structured as follows.  In \S \ref{sec:analytic} we provide background and useful tools and
results from $q$-combinatorial analysis.
In \S \ref{sec:g3sec} - \S \ref{sec:Ksec} we
introduce and study many infinite families of $q$-hypergeometric series inside and outside $\mathbb D$ in analogy to the
curiosities observed in (\ref{diagramfq}).
%Our results exhibit a dual nature of these classes of $q$-hypergeometric series, and demonstrate how the non-uniqueness of $q$-hypergeometric representations for mock and false theta functions provide insight into their underlying analytic theories, while building a series of general examples to explain the
%structure of the additional terms like $S(q)$ that appear in the relations outside the unit disc.

%%%%%%%%%%%%%%%%%%%%
%%
%%  COMBINTORIAL ANALYSIS
%%
%%%%%%%%%%%%%%%%%%%%%
\section{$q$-combinatorial analysis}\label{sec:analytic}
In this section, we provide some combinatorial identities which we require for the proofs of the main theorems.
Many of the series studied in this paper are related to the \emph{basic hypergeometric series}, defined by
$$
F(a,b;t) = F(a,b;t,q) := \sum_{n\geq 0} \frac{(aq)_n}{(bq)_n}t^n.
$$
We refer the reader to  \cite{fine} for a detailed discussion on these functions, including questions of convergence.  These series satisfy various difference equations, including the following (see (2.4) of \cite{fine})
 \begin{align}\label{fineshift}F(a,b;t) = \frac{1-b}{1-t} + \frac{b-atq}{1-t} F(a,b;tq),
 \end{align}
 which lead to many interesting identities. One identity, the ``Rogers-Fine identity," that we will make use of here is as follows (see equation (1) of \cite{rogers}):
\begin{equation}\label{eqn:RogersFine}
F(a/q,b/q;t) = \sum_{n\geq0} \frac{(a, a t q/b)_n b^n
t^n q^{n^2-n} \left(1-a t q^{2n}\right)}{(b)_n (t)_{n+1}}.
\end{equation}  Iteration methods also give rise to the identities (see (12.2) and (12.3) of \cite{fine})
\begin{align}\label{122fine} (1-t) F(a,b;t) &= \sum_{n\geq 0} \frac{\left(\frac{b}{a}\right)_n}{(bq)_n (tq)_n}(-at)^n q^{\frac{n(n+1)}{2}}, \\ \label{123fine}
(1-t)F(0,b;t) &= \sum_{n\geq 0}\frac{(bt)^n q^{n^2}}{(bq)_n (tq)_n}, \end{align} where (\ref{123fine}) is derived from (\ref{122fine}) by letting $a\to 0$.
In special cases, these $q$-hypergeometric series may directly reduce to simpler expressions, such as (see (14.31) of \cite{fine})
\begin{align} (1-a) F(a,-a;a) = 1 + 2 \sum_{n\geq 1} (-1)^n a^{2n} q^{n^2}.\label{faaa} \end{align}
On the other hand, the following more involved result of Andrews (see Theorem 1 of \cite{andrewsPartial}),
when combined with the Rogers-Fine identity, is very useful in establishing many
partial theta function identities
\begin{align}\label{eqn:andrewsPartial}
\sum_{n \geq 0}
 \frac{(B, -Abq)_n }{(-aq, -bq)_n} q^n=&
-a^{-1}\frac{(B, -Abq)_\infty}{(-aq, -bq)_\infty}
\sum_{m \geq 0}\frac{\left(A^{-1}\right)_m }{\left(-Ba^{-1}\right)_{m+1} }\left(Aba^{-1}q\right)^m\\
& +\left(1+a^{-1}\right)(1+b)\sum_{m\geq 0}
\frac{\left(-a^{-1}q, -a^{-1} ABq\right)_m}{\left(-Ba^{-1}, Aba^{-1}q\right)_{m+1}}(-b)^m.\nonumber
\end{align}
Similarly, we have the following results of Ramanujan (see $(1.1)_R$ of \cite{andrewsPartial}, $(3.1)$ of \cite{by},  and $(3.11)_R$ of \cite{andrewsPartial}):
\begin{align} \label{ram11}
\sum_{n\geq 0} \frac{q^n}{(-aq, \!-a^{-1}q)_n}&=(1+a) \sum_{n\geq 0} \!a^{3n} q^\frac{n(3n+1)}{2} \!\left(1\!-\!a^2 q^{2n+1}\right)\!-\!\frac{a}{(-aq, \!-a^{-1}q)_\infty}\sum_{n\geq 0}(-1)^n a^{2n}q^{\frac{n(n+1)}{2}},
  \\
\label{by31}
\sum_{n\geq 0} \frac{(-1)^n a^{2n} q^{\frac{n(n+1)}{2}}}{(-aq)_n} &=  \sum_{n\geq 0} a^{3n} q^{\frac{n(3n+1)}{2}}\left(1-a^2 q^{2n+1}\right), \end{align}{\vspace{-.1in}} \begin{align}
\label{311Rand} (1\!+\!a^{-1})\!\!\sum_{n\geq 0} \!\frac{q^{2n + 1} \left(q;q^2\right)_n}{\left(-aq,-a^{-1}q;q^2\right)_{n+1}} = \sum_{n\geq 0} \!(-a)^n q^{\frac{n(n+1)}{2}} \!\!-\! (q)_\infty   \sum_{n\geq 0} \!q^{3n^2 + n} a^{3n} \left(1\!-\!a^2 q^{4n+2}\right) \!\!\left(\sum_{m \in \mathbb Z} q^{m^2}a^m\!\right)^{\!-1}\!\!\!\!\!.
\end{align}
%%%%%%%%%%%%%%%%%%%%%%%%%%%%%
%%%%%%%%%%%SECTION 3%%%%%%%%%%%%
%%%%%%%%%%%%%%%%%%%%%%%%%%%%%
\section{The universal mock theta function $g_3$}\label{sec:g3sec}
 In this section, we study the universal mock theta function $g_3(w;q)$ defined in (\ref{eqn:universal3}).
 Theorem \ref{thm:g3_q<1}
gives three different $q$-hypergeometric series representations of   $g_3(w;q)$ inside the unit disc.  Theorem \ref{thm:g3_q>1} shows that these series represent different functions outside $\mathbb{D}$ that are related to each other by partial theta functions.   To state our results, define for $|q|< 1$ and $w\in\C\setminus\{0\}$ with $w\neq q^\ell(\ell\in\mathbb Z)$
\begin{align*}
g_{3,1}(w;q) &:= g_3(w;q), \\
g_{3,2}(w;q) &:= -\frac{1}{w} + \frac{1}{w(1-w)} R(w;q).
\end{align*}
Additionally, for $w\neq q^{-\ell}(\ell\in\N)$ with $|w|>|q|$, we let
\[
g_{3,3}(w;q) := \sum_{n\ge 0} \frac{w^{-n} q^n}{(w)_{n+1}}.
\]
\begin{theorem}\label{thm:g3_q<1}
For $\abs{q}<1$ and $w \in \mathbb C$ with the same restrictions as above, we have that
$$
g_{3,1}(w;q) = g_{3,2}(w;q)= g_{3,3}(w;q).
$$
\end{theorem}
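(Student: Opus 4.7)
The plan is to realize each of $g_{3,1}$, $g_{3,2}$, $g_{3,3}$ as a specialization of Fine's basic hypergeometric series $F(a,b;t)$, and then match them using the identities collected in \S\ref{sec:analytic}. Directly from its definition and the factorization $(w)_{n+1} = (1-w)(wq)_n$, the series $g_{3,3}$ is rewritten as
\begin{equation*}
(1-w)\, g_{3,3}(w;q) \;=\; \sum_{n \ge 0}\frac{(w^{-1}q)^n}{(wq)_n} \;=\; F(0,w;w^{-1}q).
\end{equation*}

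To establish $g_{3,1} = g_{3,3}$, I would apply the Rogers--Fine identity \eqref{eqn:RogersFine} with the parameter choice $a=0$, $b=wq$, $t=w^{-1}q$. The key simplification is that $a=0$ trivialises the Pochhammer factors $(a,\,atq/b)_n$ and reduces $1-atq^{2n}$ to $1$, so Rogers--Fine collapses to
\begin{equation*}
F(0,w;w^{-1}q) \;=\; \sum_{n\ge 0}\frac{q^{n(n+1)}}{(wq)_n\,(w^{-1}q)_{n+1}} \;=\; (1-w)\, g_{3}(w;q).
\end{equation*}
Comparing with the previous display yields $g_{3,1}(w;q) = g_{3,3}(w;q)$.

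To establish $g_{3,1} = g_{3,2}$, I would evaluate $F(0,w;w^{-1})$ in two different ways. First, Fine's identity \eqref{123fine} with $b=w$, $t=w^{-1}$ has $bt=1$ and therefore specializes to
\begin{equation*}
(1-w^{-1})\, F(0,w;w^{-1}) \;=\; \sum_{n\ge 0}\frac{q^{n^2}}{(wq)_n(w^{-1}q)_n} \;=\; R(w;q),
\end{equation*}
giving $F(0,w;w^{-1}) = -wR(w;q)/(1-w)$. Second, the difference equation \eqref{fineshift} applied with $a=0$, $b=w$, $t=w^{-1}$ expresses $F(0,w;w^{-1})$ as an affine combination of $1$ and $F(0,w;w^{-1}q)$; substituting the identification $F(0,w;w^{-1}q) = (1-w)g_3(w;q)$ from the previous step and simplifying yields $F(0,w;w^{-1}) = -w - w^2 g_3(w;q)$. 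Equating the two evaluations and solving for $g_3(w;q)$ produces $R(w;q) = (1-w) + w(1-w)\,g_3(w;q)$, which rearranges to exactly the claim $g_{3,1}(w;q) = g_{3,2}(w;q)$.

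The only technical point is convergence: the series $F(0,w;w^{-1}) = \sum_n w^{-n}/(wq)_n$ converges only for $|w|>1$, so the second identity is naturally proved there and then extended to the range stated in the theorem by analytic continuation in $w$, since all expressions involved are meromorphic in $w$ with the prescribed poles. I do not foresee any serious obstacle: once the parameter choices $(a,b,t) = (0,wq,w^{-1}q)$ and $(0,w,w^{-1})$ are fixed, the argument amounts to a single application of Rogers--Fine together with the shift \eqref{fineshift} and Fine's identity \eqref{123fine}.
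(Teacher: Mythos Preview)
Your proof is correct. For the equality $g_{3,1}=g_{3,3}$ you use Rogers--Fine \eqref{eqn:RogersFine} where the paper uses the closely related identity \eqref{123fine}; these are equivalent here and the step is essentially the same.

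For the equality $g_{3,1}=g_{3,2}$, however, your argument is genuinely different from the paper's. The paper invokes the Appell--Lerch type expansions \eqref{riden1} and \eqref{g3iden1} for $R(w;q)$ and $g_3(w;q)$ (cited from Zagier and Gordon--McIntosh) and then applies Euler's pentagonal number theorem to collapse the difference. Your route stays entirely within the $q$-hypergeometric toolkit of \S\ref{sec:analytic}: you evaluate $F(0,w;w^{-1})$ once via \eqref{123fine} to get $R(w;q)$, once via the shift \eqref{fineshift} composed with the step already done for $g_{3,3}$, and equate. The advantage of your approach is that it is self-contained and avoids importing the bilateral Appell--Lerch representations; the paper's approach, by contrast, highlights the connection to the mock-modular machinery underlying $R$ and $g_3$. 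Your observation that $F(0,w;w^{-1})$ requires $|w|>1$ for convergence, with analytic continuation in $w$ extending the identity, is correct and worth noting explicitly.
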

To describe our next result concerning these representations outside the unit disc (for certain ranges of $w$), we require further functions.  Generalizing the partial theta function
$\psi(q)$ defined in \eqref{eqn:rogersExample}, and the function $S(q)$, given in \eqref{spor}, we define for $\abs{q}<1 $ the (two-variable) functions
\begin{align}
\psi_1\left(w; q\right) &:= -w - w^2\sum_{n \geq 0} \(\frac{12}{n}\) w^{\frac{n-1}{2}}  q^{\frac{n^2-1}{24}} ,
\label{eqn:g3partial} \\
\psi_2(w;q) &:= \sum_{n\ge 0} w^{n} q^{\frac{n(n+1)}{2}}.  \nonumber
\end{align}
Moreover if $w\neq q^\ell (\ell\in\Z)$, let
\[
S_2\left(w;q\right) := \frac{w^2}{(wq, w^{-1})_\infty}\psi_2(-w^2;q).
\]
\begin{remark}Using the fact that $(-1)^{(n-1)/2} = \(\frac{-1}{n}\)$, we have that  $\psi_1(-1;q)=1-\psi(q)$.  Moreover $2S_2(-1;q) = S(q)$.
 \end{remark}
\begin{theorem}\label{thm:g3_q>1} For $\abs{q}<1$  and $w\in \C\setminus \{0\}$ with $w\neq q^\ell(\ell\in\Z)$   we have
\begin{equation}\label{eqn:g3_1/qA}
g_{3,2}\left(w;q^{-1}\right)  =\psi_1\left(w^{-1};q\right) + S_2\left(w^{-1};q\right).
\end{equation}
Moreover if $w\neq q^\ell(\ell\in\N_0)$ and $w\neq 0$, we have that
\begin{equation}\label{eqn:g3_1/qB}
 g_{3,3}\left(w;q^{-1}\right) = \psi_1\left(w^{-1};q\right).
\end{equation}
\end{theorem}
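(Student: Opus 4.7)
The plan is to reduce both equations to Ramanujan's partial theta identities \eqref{ram11} and \eqref{by31}. In each case, I first invert the Pochhammer symbols via the standard formula $(a;q^{-1})_n = (a^{-1};q)_n(-a)^n q^{-n(n-1)/2}$ to convert the $q^{-1}$-series into a $q$-series that converges for $|q|<1$, and then recognize a partial theta function on the right-hand side. Before doing this I record a reformulation of the right-hand side: setting
\[
\Theta(a;q) := \sum_{n\ge 0}(-1)^n a^{3n}q^{n(3n+1)/2}\bigl(1-a^2q^{2n+1}\bigr),
\]
and using that $\left(\frac{12}{n}\right)$ vanishes except when $n=6m\pm 1$, in which case it equals $(-1)^m$, the character sum in $\psi_1$ splits into two subsums indexed by $6m+1$ and $6m-1$; after shifting the index in one of them, they reassemble into the $(1-a^2q^{2n+1})$-form, giving
\[
\psi_1(a;q) = -a - a^2\,\Theta(a;q).
\]

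For \eqref{eqn:g3_1/qB}, the Pochhammer inversion applied to $(w;q^{-1})_{n+1}$, after the $q$-powers combine with $w^{-n}q^{-n}$, yields
\[
g_{3,3}(w;q^{-1}) = \frac{a}{a-1}\,L(a),\qquad a := w^{-1},\quad L(a) := \sum_{n\ge 0}\frac{(-1)^n a^{2n}q^{n(n-1)/2}}{(aq;q)_n}.
\]
The key step is the purely algebraic identity $L(a) = (1-a)(1+a\Psi(a))$, where $\Psi(a) := \sum_{n\ge 0}\frac{(-1)^n a^{2n}q^{n(n+1)/2}}{(aq;q)_n}$. To prove it I form $L(a)-\Psi(a)$ and split $1-q^n = (1-aq^n) + (a-1)q^n$ inside the resulting sum: the first piece telescopes (after the shift $n\to n+1$) to $-a^2\Psi(a)$, and the second equals $(a-1)(\Psi(a)-1)$. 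Finally, \eqref{by31} with $a\mapsto -a$ gives $\Psi(a) = \Theta(a;q)$, so
\[
g_{3,3}(w;q^{-1}) = \tfrac{a}{a-1}(1-a)\bigl(1+a\Theta(a;q)\bigr) = -a-a^2\Theta(a;q) = \psi_1(w^{-1};q).
\]

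For \eqref{eqn:g3_1/qA}, applying the same inversion to both factors of the denominator of $R(w;q^{-1})$ produces $q^{n(n+1)}$ that cancels the $q^{-n^2}$ in the numerator, leaving $R(w;q^{-1}) = \sum_{n\ge 0}\frac{q^n}{(wq,w^{-1}q;q)_n}$. I then invoke \eqref{ram11} with the substitution $a\mapsto -w^{-1}$ (the crucial choice that produces $\Theta(w^{-1};q)$ rather than $\Theta(w;q)$) to obtain
\[
R(w;q^{-1}) = (1-w^{-1})\,\Theta(w^{-1};q) + \frac{w^{-1}}{(wq,w^{-1}q;q)_\infty}\,\psi_2(-w^{-2};q).
\]
Substituting into $g_{3,2}(w;q^{-1}) = -w^{-1} + R(w;q^{-1})/(w(1-w))$ and using both $(1-w^{-1})/(w(1-w)) = -w^{-2}$ and $(1-w)(wq;q)_\infty = (w;q)_\infty$, the $\Theta$-piece combined with $-w^{-1}$ yields $\psi_1(w^{-1};q)$, while the remaining term is precisely $S_2(w^{-1};q)$ by its definition.

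The main obstacle is the algebraic identity $L(a) = (1-a)(1+a\Psi(a))$; it does not appear to follow mechanically from any transformation in \S\ref{sec:analytic}, and the decisive trick is the splitting $1-q^n = (1-aq^n)+(a-1)q^n$, which recycles part of the sum into a shifted copy of $\Psi$. The remaining subtlety is the sign choice in \eqref{ram11}: since $(-aq,-a^{-1}q;q)_n$ is symmetric under $a\leftrightarrow a^{-1}$ but the right-hand side is not, the two substitutions $a=-w$ and $a=-w^{-1}$ give different partial theta identities, and only $a=-w^{-1}$ produces the one matching $\psi_1(w^{-1};q)$.
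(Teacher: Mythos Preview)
Your argument is correct and, for \eqref{eqn:g3_1/qA}, coincides with the paper's proof line for line: the paper also inverts the Pochhammer symbols via \eqref{randq} to reach $\sum_{n\ge 0} q^n/(wq,w^{-1}q)_n$, applies \eqref{ram11} with $a=-w^{-1}$, and then rewrites the resulting partial theta sum using \eqref{partialident} (which is exactly your reformulation $\psi_1(a;q)=-a-a^2\Theta(a;q)$).

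For \eqref{eqn:g3_1/qB} the overall skeleton is again the same as the paper's---both reduce $g_{3,3}(w;q^{-1})$ to the series $L(a)=\sum_{n\ge 0}(-1)^n a^{2n}q^{n(n-1)/2}/(aq)_n$, establish $L(a)=(1-a)(1+a\Psi(a))$, and then invoke \eqref{by31}---but the derivation of that middle identity differs. The paper realizes $L(a)$ as $\lim_{x\to\infty}F\bigl(x,a;a^2/(qx)\bigr)$ and applies Fine's difference equation \eqref{fineshift} once to shift $t\mapsto tq$, which in the limit produces exactly $(1-a)+a(1-a)\Psi(a)$. Your splitting $1-q^n=(1-aq^n)+(a-1)q^n$ achieves the same thing by bare hands, without the $F$-machinery or the limiting argument. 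So your remark that the identity ``does not appear to follow mechanically from any transformation in \S\ref{sec:analytic}'' is not quite right: it is precisely what \eqref{fineshift} delivers. That said, your direct computation is clean, self-contained, and arguably more transparent than the limit of a two-parameter $F$; the paper's route has the advantage of fitting into the systematic Fine framework used elsewhere.
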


\begin{proof}[Proof of Theorem \ref{thm:g3_q<1}]
To prove the first equality stated in Theorem \ref{thm:g3_q<1}, we use the identities
\begin{align}\label{riden1}
R(w;q) & = \frac{1-w}{(q)_\infty} \sum_{n\in \mathbb Z} \frac{(-1)^n q^\frac{3n^2+n}{2}}{1-wq^n}, \\ \label{g3iden1}
g_3(w;q) &= \frac{1}{(q)_\infty} \sum_{n\in \mathbb Z} \frac{(-1)^n q^\frac{3n^2 + 3n}{2}}{1-wq^n},
\end{align}
the first of which is contained in the proof of Theorem 7.1 of \cite{zagierBourbaki}, and the second of which may be found in (3.3) of \cite{gmSurvey}. Using these, and the fact that $\sum_{n\in\Z} (-1)^n q^{(3n^2+n)/2} = (q)_\infty$, we obtain that
\begin{align*}
g_{3,1}(w;q) - g_{3,2}(w;q) &= \frac{1}{w} - \frac{1}{w(q)_\infty} \sum_{n\in\mathbb Z} (-1)^n q^\frac{3n^2 + n}{2} = 0.
\end{align*}

To prove the second equality stated in Theorem \ref{thm:g3_q<1}, we first rewrite
$$(1-w)g_{3,3}(w;q) = F\left(0,w;\frac{q}{w}\right).$$ We then use (\ref{123fine}) to deduce that $g_{3,3}(w;q) = g_{3,1}(w;q)$.
\end{proof}

\begin{proof}[Proof of Theorem \ref{thm:g3_q>1}]
Set $\rho=q^{-1}$. We use the fact that for $n \in \N_0$ we have
 \begin{equation}\label{randq}
(a;\rho)_n = (a^{-1};q)_n (-a)^n \rho^{\frac{n(n-1)}{2}}.
\end{equation}
We obtain, employing \eqref{ram11} with $a=-w^{-1}$,
\begin{eqnarray*}
g_{3,2}(w;\rho)&=&
-w^{-1}+\frac{1}{w(1-w)} \sum_{n\ge 0} \frac{\rho^{n^2}}{(w\rho, w^{-1}\rho;\rho)_n} =
-w^{-1} + \frac{1}{w(1-w)}\sum_{n\ge 0}\frac{q^{n} }{(wq, w^{-1}q)_n} \\
&=&  -w^{-1} -w^{-2} \sum_{n\geq 0} (-1)^n w^{-3n} q^\frac{n(3n+1)}{2} \left(1-w^{-2} q^{2n+1}\right) + S_2(w^{-1};q).
\end{eqnarray*}
We note that the second equality also yields convergence in the claimed domain. Questions of convergence follow similar in all cases, therefore we will not mention them anymore.

Identity (\ref{eqn:g3_1/qA}) is now deduced from the easily verified identity
\begin{align} \label{partialident}
\sum_{n\ge 0} (-1)^n w^{-3n}q^{\frac{n(3n+1)}{2}}\left(1-w^{-2}q^{2n+1}\right)
= \sum_{n\ge 0} \(\frac{12}{n}\) w^{\frac{-n+1}{2}}
q^{\frac{n^2-1}{24}}. \end{align}

To prove \eqref{eqn:g3_1/qB}, we use again (\ref{randq}) to obtain
\begin{align}\label{g33calc1}
g_{3,3}(w;\rho)=
\sum_{n\ge 0} \frac{w^{-n}\rho^n}{(w;\rho)_{n+1}} =
\sum_{n\ge0} \frac{w^{-2n-1}(-1)^{n+1}q^{\frac{n^2-n}{2}}}{(w^{-1})_{n+1}}.
\end{align}
Using \eqref{fineshift}, \eqref{by31}, and \eqref{partialident}, we find that
\begin{align*}
\sum_{n\ge 0} \frac{(-1)^n w^{-2n} q^{\frac{n^2-n}{2}}}{(w^{-1}q)_{n}} &=
\lim_{x\to \infty} F\(x, w^{-1}; \frac{w^{-2}}{q x}\)
=  \lim_{x\to \infty}\( \frac{1-w^{-1}}{1-\frac{w^{-2}}{q x}} +
\frac{w^{-1}-w^{-2}}{1-\frac{w^{-2}}{q x }} F\(x, w^{-1}; \frac{w^{-2}}{x}\) \) \\
&= \left(1-w^{-1}\right) + w^{-1}\left(1-w^{-1}\right) \sum_{n\ge 0} \frac{(-1)^n w^{-2n} q^{\frac{n(n+1)}{2}}}{(w^{-1} q)_n}  \\
&= \left(1-w^{-1}\right)\left(1+w^{-1}\sum_{n\ge 0} \(\frac{12}{n}\) w^{\frac{-n+1}{2}} q^{\frac{n^2-1}{24}}\right)
= (1-w) \psi_1(w^{-1};q).
\end{align*} Inserting this into (\ref{g33calc1}) now yields the result.
\end{proof}
\begin{remark} As is suggested in \cite{lz}, the partial theta function $\psi_1(\zeta;q)$, for $\zeta$ a root of unity,
is closely related to the \emph{shadow} of the mock theta function $g_3(\zeta;q)$ (see \cite{zagierBourbaki}, e.g., for a definition of this term and more on mock modular forms and \cite{kang} for the shadow of $g_3$).
%That is, by work of Kang in \cite{kang}, we have that
%$$q^{-1/24} \zeta^{-\frac{1}{2}} (1 + \zeta g_3(\zeta; q))$$
% is a mock modular form of weight $1/2$ with shadow
%proportional to the weight $3/2$ unary theta function \begin{align*}
% \sum_{n \in \Z} n \(\frac{12}{n}\)  \zeta^{\frac{n-1}{2}} q^{\frac{n^2-1}{24}},
% \end{align*} for roots of unity $\zeta \ne 1$.
%Note the similarity between this series and $\psi_1(\zeta;q)$.
\end{remark}
%%%%%%%%%%%%%%%%%%%%%%%%%%%%%
%%%%%%%%%%%SECTION 4%%%%%%%%%%%%
%%%%%%%%%%%%%%%%%%%%%%%%%%%%%
\section{The universal mock theta function $g_2$}\label{sec:g2sec}

 Just as in the case of the universal mock theta function $g_3(w;q)$ described in \S \ref{sec:g3sec}, we can give alternate $q$-hypergeometric series
expressions for the universal mock theta function $g_2(w;q)$ inside $\mathbb D$, and moreover, show that these different expressions are related by partial theta functions outside $\mathbb D$.  One particularly nice expression involves a combinatorial rank generating function.
To describe this, we let for $|q|< 1$, and $w\in \mathbb C\setminus \{0\}$ with $w \neq q^\ell \left(\ell \in \mathbb Z \setminus \{0\}\right)$,
$$O_2(w;q) := \sum_{n\ge 0} \frac{(-1)_n q^{\frac{n(n+1)}{2}}}{(wq, w^{-1}q)_n},
$$
denote the overpartition rank generating function (defined in Proposition 1.1 of \cite{lovejoy}).
We point out that the overpartition function
$O_2(w;q)$ is equal to McIntosh's function $K_2(w;q)$, and the universal mock theta function $g_2(w;q)$ is equal to McIntosh's function $H(w;q)$ (see \cite{mcintosh2}).

Analogous to the series $g_{3,j}(w;q), 1\leq j \leq 3$, we define for $|q|< 1$ and $w\in\C\setminus\{0\}$ with $w\neq\nolinebreak q^\ell(\ell\in\nolinebreak\mathbb Z)$
\begin{align*}
g_{2,1}(w;q) &:= g_2(w;q), \\
g_{2,2}(w;q)  &:=  \frac{1+w}{2w(1-w)} O_2(w;q) - \frac{1}{2w}.
\end{align*}
We also define for $|q|<1$ and $w\neq q^{-\ell}(\ell\in\N)$ and $|w|>1$ the function
\[
g_{2,3}(w;q) := - \frac{1+w}{2w^2}\sum_{n\ge 0} \frac{(-wq)_n}{(wq)_n} w^{-n} -\frac{1}{2w}.
\]
Our next result is an analogue to Theorem \ref{thm:g3_q<1} for $g_2(w;q)$ when $\abs{q}<1$.
\begin{theorem}\label{thm:g2_q<1}
For $\abs{q}<1$ and $w\in \C$, assuming the same restrictions as above, we have that
\begin{align}
g_{2,1}(w;q) = g_{2,2}(w;q) = g_{2,3}(w;q).
\end{align}
\end{theorem}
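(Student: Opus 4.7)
The plan is to reduce each of $g_{2,1}$, $g_{2,2}$, $g_{2,3}$ to a Fine-type $F(a,b;t)$-series via Fine's identity \eqref{122fine}, and then establish the single remaining $F$-function identity using Fine's recursion \eqref{fineshift} together with a short $q$-series manipulation. The equality $g_{2,2}(w;q)=g_{2,3}(w;q)$ is immediate: the sum in the definition of $g_{2,3}$ is precisely $F(-w,w;w^{-1})=\sum_{n\ge 0}(-wq)_n w^{-n}/(wq)_n$, and applying \eqref{122fine} with $(a,b,t)=(-w,w,w^{-1})$ (so that $b/a=-1$, $-at=1$, and $(bq,tq)_n=(wq,w^{-1}q)_n$) yields $(1-w^{-1})F(-w,w;w^{-1})=O_2(w;q)$; substituting this back into the definition of $g_{2,3}$ produces $g_{2,2}$. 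A second application of \eqref{122fine} with $(a,b,t)=(-w/q,w,q/w)$ (for which $b/a=-q$, $-at=1$, $(tq)_n=(w^{-1}q^2)_n=(w^{-1}q)_{n+1}/(1-w^{-1}q)$, and the common factor $1-q/w=1-w^{-1}q$ cancels between the two sides) gives the compact representation
\begin{equation*}
g_2(w;q)=\frac{1}{1-w}\,F(-w/q,w;q/w).
\end{equation*}

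Combining these, the remaining equality $g_{2,1}=g_{2,3}$ reduces to the $F$-function identity
\begin{equation*}
2w^2\,F(-w/q,w;q/w)=(w^2-1)F(-w,w;w^{-1})+w(w-1). \qquad (\ast)
\end{equation*}
Applying Fine's recursion \eqref{fineshift} to $F(-w,w;w^{-1})$ (with $t=w^{-1}$, $tq=q/w$) yields $F(-w,w;w^{-1})=-w+w(w+q)F(-w,w;q/w)/(w-1)$, so after substitution and simplification $(\ast)$ becomes equivalent to the cleaner relation
\begin{equation*}
(w+1)(w+q)\,F(-w,w;q/w)-2w\,F(-w/q,w;q/w)=w(w-1). \qquad (\ast\ast)
\end{equation*}

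The main (and now only) step is to prove $(\ast\ast)$, which I plan to do by direct series manipulation. Using $(-w)_n=(1+w)(-wq)_n/(1+wq^n)$ to combine the two series on the left, the left-hand side of $(\ast\ast)$ becomes $(1+w)\sum_{n\ge 0}(-wq)_n(q/w)^n[wq^n(w+q)-(w-q)]/\{(1+wq^n)(wq)_n\}$. The key trick is the algebraic splitting $wq^n(w+q)-(w-q)=-w(1-wq^n)+q(1+wq^n)$: the first piece cancels the final factor of $(wq)_n=(1-wq^n)(wq)_{n-1}$, while the second piece absorbs the $(1+wq^n)$ in the denominator via $(1+wq^n)(-wq)_{n-1}=(-wq)_n$ for $n\ge 1$. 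After an index shift $m=n-1$, both resulting series become rational multiples of $F(-w,w;q/w)$ which cancel exactly, and the $n=0$ boundary term contributes the constant $w(w-1)$. The only anticipated obstacle is the careful bookkeeping of this telescoping step, which I expect to be routine.
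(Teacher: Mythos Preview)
Your argument is correct. The identification $g_{2,2}=g_{2,3}$ via \eqref{122fine} is exactly the paper's route, and your derivation of $(\ast\ast)$ from $(\ast)$ via \eqref{fineshift} checks out. The telescoping step for $(\ast\ast)$ is also valid: with the splitting $wq^n(w+q)-(w-q)=-w(1-wq^n)+q(1+wq^n)$, the first piece (for $n\ge 1$) uses both $(wq)_n=(1-wq^n)(wq)_{n-1}$ and $(-wq)_n=(1+wq^n)(-wq)_{n-1}$ to yield $-q(1+w)F(-w,w;q/w)$ after the shift $m=n-1$, while the second piece simply cancels the denominator factor $(1+wq^n)$ and gives $q(1+w)F(-w,w;q/w)$ directly; the surviving $n=0$ contribution from the first piece is $w(w-1)$. (Your description attaches the $(-wq)_{n-1}$ manipulation to the second piece rather than the first, but this is a cosmetic slip; the arithmetic is right.)

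Where your approach differs from the paper is in the equality $g_{2,1}=g_{2,2}$. The paper simply invokes (3.2) of McIntosh \cite{mcintosh2}, whereas you give a self-contained derivation: you rewrite $g_2(w;q)$ as $\tfrac{1}{1-w}F(-w/q,w;q/w)$ via a second instance of \eqref{122fine} and then reduce the comparison with $F(-w,w;w^{-1})$ to the elementary identity $(\ast\ast)$. This buys independence from the external reference at the cost of a short but slightly delicate bookkeeping computation; the paper's citation is quicker but less transparent. Both are valid.
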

Next we will analyze these functions outside the unit disc.  We introduce partial theta functions for $|q|<1$, defined for $w\in\C$ as follows:
\begin{align}\label{eqn:g2partial}
\psi_3(w;q)&:=\sum_{n\geq 0}(-1)^{n+1} w^{2n+1}q^{n^2}, \\
\label{eqn:g2partial2}\psi_4(w;q) &:= \sum_{n \geq 1}w^{3n-2}q^{\frac{3n^2 -n}{2}}\left(1-wq^{n}\right).
\end{align}
Moreover if $w\neq 0$ and $w\neq -q^\ell (\ell\in\Z)$, we define
\begin{align}
S_1(w;q) &:= \frac{(-q)_\infty}{(-w^{-1}, -wq)_\infty} \left( w^{-1} \psi_1(w;q) + 1 \right), \end{align} and for $w\neq 0$ with $w\neq q^\ell (\ell\in\mathbb Z)$, we let
\begin{align} S_4(w;q) &:=\frac{(-q)_\infty}{(wq, w^{-1})_\infty} \psi_4(w;q).
\end{align}
In analogy to Theorem \ref{thm:g3_q>1}, we have the following relations between the series in Theorem \ref{thm:g2_q<1} and
partial theta functions.
\begin{theorem}\label{thm:g2_q>1}
For $\abs{q}<1$ and $w\in \C\setminus\{0\}$ with $w\neq q^\ell (\ell\in\Z)$, we have
\begin{align}
g_{2,1}\left(w;q^{-1}\right)
   &= -w^{-2}\psi_3\left(w;q\right) - w^{-1}
   +S_4(w;q),
   \label{eqn:g2_q>1A}
   \\ \label{eqn:g2_q>1B}
g_{2,2}\left(w;q^{-1}\right) &= \psi_3\left(w^{-1};q\right) + S_1(-w^{-1};q).
\end{align}
Moreover for $|q|<1$ and $w\in\C$ with $w\neq q^\ell(\ell\in\N)$ and $|w|>1$, we have
\begin{equation}\label{eqn:g2_q>1C}
g_{2,3}\left(w;q^{-1}\right) =   \psi_3\left(w^{-1};q\right).
\end{equation}
\end{theorem}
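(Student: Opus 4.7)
The plan is to derive all three identities by first using the inversion formula \eqref{randq} to convert each $g_{2,j}(w;q^{-1})$ into a $q$-hypergeometric series with $|q|<1$, and then applying the combinatorial identities from Section~\ref{sec:analytic} to bring each series into the claimed form. The strategy mirrors the proof of Theorem~\ref{thm:g3_q>1}, but here the reduction funnels through Andrews's partial theta identity \eqref{eqn:andrewsPartial} together with the Rogers-Fine identity \eqref{eqn:RogersFine}.

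Part (C) is the most direct and I would treat it first. Applying \eqref{randq} to each Pochhammer symbol in $g_{2,3}(w;q^{-1})$, the ratio $(-wq^{-1};q^{-1})_n/(wq^{-1};q^{-1})_n$ collapses to $(-1)^n (-w^{-1}q)_n/(w^{-1}q)_n$, so after combining with the leading $w^{-n}$ the inner sum becomes the Fine series $F(-w^{-1},w^{-1};-w^{-1})$ (the hypothesis $|w|>1$ ensures convergence). Identity \eqref{faaa} with $a=-w^{-1}$ evaluates this series in closed form, and a short cancellation of the prefactor $-(1+w)/(2w^2)$ against $1/(1+w^{-1})$ plus the constant $-1/(2w)$ yields $\sum_{n\geq 0}(-1)^{n+1}w^{-(2n+1)}q^{n^2}=\psi_3(w^{-1};q)$.

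For Part (A), applying \eqref{randq} yields
\[
g_2(w;q^{-1})\;=\;\frac{q}{(1-w^{-1})(1-wq)}\sum_{n\geq 0}\frac{(-q)_n q^n}{(w^{-1}q)_n(wq^2)_n}.
\]
I would apply Andrews's identity \eqref{eqn:andrewsPartial} with $B=-q$, $a=-w^{-1}$, $b=-wq$, in the limit $A\to 0$. The ``partial theta'' piece, after an index shift $k=m+1$ and application of \eqref{by31} with $a=w$, simplifies to $\psi_4(w;q)$; combined with its theta prefactor $(-q)_\infty/(wq,w^{-1})_\infty$ this contribution is precisely $S_4(w;q)$. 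The remaining (Fine-type) piece reduces to $-\frac{wq}{1+wq}F(w,-wq;wq)$, which by Rogers-Fine \eqref{eqn:RogersFine} collapses dramatically (the factor $(1-atq^{2n})$ cancels against a piece of $(b)_n(t)_{n+1}$) to $(1+wq)\sum_{n\geq 0}(-1)^n w^{2n}q^{n^2+2n}$; a shift $k=n+1$ then identifies this contribution as $-w^{-2}\psi_3(w;q)-w^{-1}$, giving (A). Part (B) follows the same pattern applied to $T:=\sum_n(-q)_n q^n/(wq^2,w^{-1}q^2)_n$ via Andrews with $a=-wq$, $b=-w^{-1}q$, $A\to 0$: the partial theta piece, via \eqref{by31} with $a=w^{-1}$ and the character-sum reformulation \eqref{partialident}, becomes a multiple of $\psi_1(-w^{-1};q)$ producing $S_1(-w^{-1};q)$, while the Fine-type residual yields $\psi_3(w^{-1};q)$ after the prefactor arithmetic dictated by the definition $g_{2,2}(w;q)=\frac{1+w}{2w(1-w)}O_2(w;q)-\frac{1}{2w}$.

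The main obstacle is the bookkeeping in parts (A) and (B): the partial theta contribution arising from Andrews's identity does not appear as $\psi_3$, $\psi_4$, or $\psi_1$ directly, and a careful index shift together with \eqref{by31} (and, in (B), \eqref{partialident}) is required for the identification. The Fine-type residual demands the Rogers-Fine identity \eqref{eqn:RogersFine}, whose internal cancellations effect the surprising collapse to a clean partial theta function. Carrying these simplifications through while tracking the sometimes delicate prefactors---in particular in (B), where one must match the explicit form $S_1(w;q)=\frac{(-q)_\infty}{(-w^{-1},-wq)_\infty}\bigl(w^{-1}\psi_1(w;q)+1\bigr)$ after the substitution $w\mapsto -w^{-1}$---is where the real work lies.
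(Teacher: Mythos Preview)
Your strategy for parts (A) and (C) is essentially the paper's own. For (C) the paper does exactly what you describe (invert via \eqref{randq}, apply \eqref{faaa}). For (A) the paper also uses \eqref{randq} followed by Andrews's identity \eqref{eqn:andrewsPartial} with the same parameters $B=-q$, $a=-w^{-1}$, $b=-wq$, $A\to 0$; the only minor difference is that the paper evaluates the ``partial theta'' piece by first recognizing it via \eqref{122fine} as $F(-w,0;-wq)$ and then applying Rogers--Fine \eqref{eqn:RogersFine} with $a=-wq$, $t=-wq$, $b\to 0$, rather than via an index shift and \eqref{by31}. The Rogers--Fine route is cleaner and avoids the awkward $q^{-1}$ that your shift $k=m+1$ produces before it eventually cancels against the prefactor $wq$.

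For part (B), however, your inversion is miscomputed. Applying \eqref{randq} to $O_2(w;\rho)$ one finds (as in the paper's \eqref{44start})
\[
g_{2,2}(w;\rho)=\frac{1+w}{2w(1-w)}\sum_{n\ge 0}\frac{(-1)_n\,q^n}{(wq,w^{-1}q)_n}-\frac{1}{2w},
\]
not your $T=\sum_n(-q)_n q^n/(wq^2,w^{-1}q^2)_n$; consequently your Andrews parameters $a=-wq$, $b=-w^{-1}q$ are off as well (the correct match would be $B=-1$, $a=-w$, $b=-w^{-1}$, $A\to 0$). With the corrected series the paper does \emph{not} invoke \eqref{eqn:andrewsPartial} directly but rather a specific Ramanujan identity, $(3.18)_R$ of \cite{andrewsPartial}, after which one piece is handled by \eqref{by31}+\eqref{partialident} and the other by \eqref{122fine}+\eqref{faaa}. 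Your proposed route through \eqref{eqn:andrewsPartial} with the corrected parameters would in fact also succeed---the resulting two pieces reduce via \eqref{by31} and \eqref{faaa} to the same expressions---so the conceptual plan is sound, but as written the bookkeeping in (B) is wrong and would not land on $\psi_3(w^{-1};q)+S_1(-w^{-1};q)$.
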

\begin{remark} It is not difficult to see that
$$\psi_{4}(w;q) = w^{-1} - w^{-2} + w^{-3} \psi_1(-w;q).$$
\end{remark}
\begin{remark} Note that $$\psi_3(w^{-1};q) + w^{-2} \psi_3(w;q) + w^{-1} = \sum_{n\in \mathbb Z} (-1)^{n+1} w^{2n-1} q^{n^2}$$ is the classical Jacobi theta function.  In this way, by Theorem \ref{thm:g2_q<1}, we have two series, $g_{2,1}(w;q)$ and $g_{2,2}(w;q)$, that are equal to each other, and define a mock theta function inside the unit disc.  On the other hand, by Theorem \ref{thm:g2_q>1}, outside of the disc these series are essentially each equal to half of a Jacobi theta function, but not the same half!
\end{remark}
\begin{proof}[Proof of Theorem \ref{thm:g2_q<1}]
By (3.2) of \cite{mcintosh2}, we have that $$g_{2,1}(w;q) =
\frac{(1+w)}{2w(1-w)} O_2(w;q) - \frac{1}{2w} = g_{2,2}(w;q).$$ On the other hand,  (\ref{122fine}) yields
$$ \frac{w}{w-1} \sum_{n\ge 0} \frac{(-1)_n q^{\frac{n(n+1)}{2}}}{(wq, w^{-1}q)_n}= F\(-w,w;\frac{1}{w}\) =
\sum_{n\ge 0} \frac{(-wq)_n}{(wq)_n} w^{-n},$$ which can be used to show that $g_{2,2}(w;q) = g_{2,3}(w;q)$.
\end{proof}

\begin{proof}[Proof of Theorem \ref{thm:g2_q>1}]
To establish \eqref{eqn:g2_q>1A},  we obtain, again using \eqref{randq},
\begin{align}\label{prelimg2}
g_2(w;\rho) = \sum_{n\geq 0} \frac{(-\rho;\rho)_n \ \rho^{\frac{n(n+1)}{2}}}{(w, w^{-1}\rho;\rho)_{n+1}} = \frac{q}{(1-wq)(1-w^{-1})} \sum_{n\geq 0} \frac{(-q)_n q^n}{(w^{-1}q, wq^2)_n}.
\end{align}
Next we apply \eqref{eqn:andrewsPartial} with $B=-q, a=-1/w,$ and $b = -wq$, and let $A\to 0$.  This gives that the rightmost expression in (\ref{prelimg2}) equals
\begin{align}\label{false}
\frac{q}{(1-w^{-1})}\sum_{n\geq 0} \frac{(w)_{n+1} (wq)^n}{(-wq)_{n+1}} + \frac{wq(-q)_\infty}{(wq, w^{-1} )_\infty} \sum_{n\geq 0} \frac{(-1)^n w^{2n} q^{\frac{n^2 + 3n}{2}}}{(-wq)_{n+1}}.
\end{align}
To the first summand in (\ref{false}) we apply (\ref{eqn:RogersFine}) with $a=wq, b=-wq^2$ and $t=wq$ to obtain
\begin{align}\label{sum1g2}-qw (1+qw)^{-1} F(w,-wq;wq)
=  \sum_{n\geq 1} (-1)^n w^{2n-1} q^{n^2}.\end{align}

Next we use (\ref{122fine}) to realize the second summand in (\ref{false}) as $$\frac{wq(-q)_\infty}{(wq,w^{-1})_\infty}F(-w, 0; -wq).$$  To this we apply (\ref{eqn:RogersFine}) with $a=-wq$ and $t=-wq$, and let $b\to 0$, yielding
\begin{align}\label{sum2g2} \frac{ (-q)_\infty}{(wq, w^{-1})_\infty} \sum_{n\geq 1} w^{3n-2} q^{\frac{3n^2 -n}{2}}(1-wq^{n}).\end{align}
Combining (\ref{sum1g2}) and (\ref{sum2g2}) gives \eqref{eqn:g2_q>1A}.

Next, to establish (\ref{eqn:g2_q>1B}), we use (\ref{randq}) and find
\begin{align}\label{44start}g_{2,2}(w;\rho) = \frac{1+w}{2w(1-w)} \sum_{n\geq 0} \frac{(-1;\rho)_n \rho^{\frac{n(n+1)}{2}}}{(w\rho, w^{-1}\rho; \rho)_n} - \frac{1}{2w}=\frac{1+w}{2w(1-w)} \sum_{n\geq 0} \frac{(-1)_n q^n}{(wq, w^{-1}q)_n} - \frac{1}{2w}.\end{align}
Then we apply an identity of Ramanujan, similar to (\ref{311Rand}), namely (3.18)$_R$ of \cite{andrewsPartial} with $c=-q^{-1}$, $a = - w$, and $b= - w^{-1}$ to see that
\begin{align}\label{secondg2id}\sum_{n\ge 0} \frac{(-1)_nq^{n}}{(wq, w^{-1}q)_n}
&= \frac{w-1}{w+1}\sum_{n\ge 0} \frac{(-1)^n q^{\frac{n(n+1)}{2}}
(-1)_nw^{-2n}}{\left(w^{-2}q^2;q^2\right)_n} +  \frac{(-1)_\infty}{w(wq, w^{-1}q)_\infty} \sum_{n\ge 0} \frac{(-1)^n q^{\frac{n(n+1)}{2}}
w^{-2n}}{(-w^{-1})_{n+1}}.
\end{align}
Using (\ref{by31}) and (\ref{partialident}), we have that the second $q$-hypergeometric series on the right hand side
equals $$\left(1+w^{-1}\right)^{-1}\sum_{n\ge 0} \(\frac{12}{n}\) (-w)^{\frac{1-n}{2}} q^{\frac{n^2-1}{24}}.$$ Next we use (\ref{122fine}) followed by (\ref{faaa}) to see that
\begin{align}\label{44tool}\sum_{n\ge 0}  \frac{(-1)^n q^{\frac{n(n+1)}{2}}
(-1)_nw^{-2n}}{(w^{-2}q^2;q^2)_n} = \left(1-w^{-1}\right)F\left(w^{-1},-w^{-1};w^{-1}\right) = 1+2\sum_{n\ge 1} (-1)^n w^{-2n} q^{n^2}.\end{align}  Using (\ref{secondg2id}) and (\ref{44tool}) in (\ref{44start}) together with (\ref{eqn:g3partial}) now
gives \eqref{eqn:g2_q>1B}.

Equation \eqref{eqn:g2_q>1C} follows by first using (\ref{randq}) to write
\begin{align}\label{g23Fid}g_{2,3}\left(w;\rho\right) = -\frac{(1+w)}{2w^2} \sum_{n\geq 0} \frac{(-w\rho;\rho)_n w^{-n}}{(w\rho;\rho)_n} - \frac{1}{2w} = -\frac{(1+w)}{2w^2}\sum_{n\geq 0} \frac{(-1)^n w^{-n} \left(-w^{-1}q\right)_n}{(w^{-1}q)_n} - \frac{1}{2w}.\end{align}
After applying (\ref{faaa}) to the rightmost series expression in (\ref{g23Fid}) and simplifying we obtain (\ref{eqn:g2_q>1C}).
\end{proof}
%\begin{remark}
%As with the function $g_3(w;q)$ if
% $\zeta  \ne 1$ is a root of unity, then by \cite{kang} $$1+ 2 \zeta g_2(\zeta; q)$$
%is a mock modular form of weight $1/2$
%with shadow equal to
% \begin{align}\label{g2shad} \sum_{n\in \Z} n \ (-1)^n  \zeta^{-2n} q^{n^2},\end{align} which is related to the partial theta functions $\psi_3(\zeta^{-1};q) $ as defined in (\ref{eqn:g2partial}).
%\end{remark}
%%%%%%%%%%%%%%%%%%%%%%%%%%%%%
%%%%%%%%%%%SECTION 5%%%%%%%%%%%%
%%%%%%%%%%%%%%%%%%%%%%%%%%%%%
\section{The universal mock theta function $K$}\label{sec:Ksec}
In this section, we study the universal mock theta function $K(w;q)$ as defined in (\ref{eqn:universalK}).  Analogous to the series $g_{k,j}(w;q), 2\leq k \leq 3, 1\leq j \leq 3$ defined in \S \ref{sec:g3sec} and \S \ref{sec:g2sec}, we consider for $|q|< 1$ and $w\neq 0$ and $w\neq q^{2\ell+1} (\ell\in\Z)$
$$
 K_1(w;q) := \sum_{n\ge 1} \frac{(-1)^{n-1} q^{n^2}(q;q^2)_{n-1}}{(w q, w^{-1}q; q^2)_{n}}.
$$
Moreover for $|q|<1$ and $w\neq q^{-2\ell+1}(\ell\in\N)$ and $|w|>|q|$, we define
$$
\kappa(w;q) := \sum_{n\geq 0} \frac{ q^{n+1}w^{-n} (wq^2;q^2)_n}{(wq;q^2)_{n+1}}.
$$
The functions $K(w;q)$ and $K_1(w;q)$ are discussed at length in relation to the function $K_2(w;q)=O_2(w;q)$ from \S \ref{sec:g2sec} in \cite{mcintosh2}.  We point out that $K(w;q)$, as given in (\ref{eqn:universalK}), is defined for $|q|<1$, and $w\in \mathbb C\setminus \{0\}$, where $w\neq q^{2\ell} \left(\ell \in \mathbb Z \setminus \{0\}\right).$

We define the following partial theta function:
$$
\psi_5(w;q) := \sum_{n\geq 0} \(\frac{n}{3}\)
(-w)^{n-1} q^{\frac{n^2-1}{3}}.
$$
If $w\neq 0$ and $w\neq q^{2\ell+1}(\ell\in\Z)$ let
$$
s_1(w;q) := \frac{\left(q;q^{2}\right)_\infty}{w\left(wq, w^{-1}q; q^2\right)_\infty} \left( w^{-1} \psi_1\left(w;q^2\right) + 1 \right).
$$
Moreover if $w\neq q^{2\ell} (\ell \in \Z \setminus\{0\})$, we define
$$
S_5(w;q) :=  \frac{w (q;q^2)_\infty}{(wq^2, w^{-1}q^2; q^2)_\infty} \psi_5(w;q).
$$
We offer the following results for these functions inside and outside the disc.

\begin{theorem}\label{thm:K1_q<1}
For $\abs{q}<1$, and under the same restrictions as above, we have
\begin{align}\label{kident}
\kappa(w;q) &=  K_1(w;q)   = -\frac{w}{(1-w)^2}\left(K(w;q)  - \frac{(q; q^2)_\infty^3 (q^2; q^2)_\infty}{(wq, w^{-1}q)_\infty}\right).
\end{align}
\end{theorem}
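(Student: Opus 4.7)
The theorem asserts two equalities, which I would handle separately. For the first, $\kappa(w;q)=K_1(w;q)$, recognize that
\begin{equation*}
\kappa(w;q) \;=\; \frac{q}{1-wq}\,F\!\left(w,\,wq;\,\tfrac{q}{w};\,q^2\right),
\end{equation*}
the basic hypergeometric series with base $q^2$. Apply the Rogers--Fine identity \eqref{eqn:RogersFine} in base $q^2$ with $a=wq^2$, $b=wq^3$, $t=q/w$. In parallel, shift $n \to n+1$ in $K_1(w;q)$ and extract $(1-wq)(1-w^{-1}q)=(1-wq)(w-q)/w$ from the denominator, placing both expressions over the common prefactor $qw/[(1-wq)(w-q)]$. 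Termwise agreement should then follow either by the telescoping split $1-q^{4n+3}=(1-q^{2n+1})+q^{2n+1}(1-q^{2n+2})$, or by a further application of \eqref{122fine} that collapses the $(wq^2,w^{-1}q^2;q^2)_n$-numerator produced by Rogers--Fine into the $(q;q^2)_n$-numerator of $K_1$.

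For the second equality, which I would first rewrite in the cleared form
\begin{equation*}
w\,K(w;q) + (1-w)^2 K_1(w;q) \;=\; \frac{w\,(q;q^2)_\infty^3(q^2;q^2)_\infty}{(wq,w^{-1}q)_\infty},
\end{equation*}
the plan is to combine the two series on the left over the common denominator $(wq,w^{-1}q;q^2)_{n+1}$, rewriting the $K$-summand by inserting the factors $(1-wq^{2n+1})(1-w^{-1}q^{2n+1})$ into both numerator and denominator. Then the partial fraction
\begin{equation*}
\frac{1}{(1-wq^{2n+1})(1-w^{-1}q^{2n+1})} \;=\; \frac{1}{1-w^2}\!\left(\frac{-w^2}{1-wq^{2n+1}}+\frac{1}{1-w^{-1}q^{2n+1}}\right)
\end{equation*}
converts the combined sum into an Appell--Lerch-type series. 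Applying Ramanujan's identity \eqref{311Rand} with $a=-w$ (after reconciling bases), the partial theta piece of \eqref{311Rand} should cancel against the contribution of $(1-w)^2 K_1$, while the remainder, whose prefactor is the Jacobi triple product $\left(\sum_{m\in\Z} q^{m^2}(-w)^m\right)^{-1}$, simplifies via the triple product to $w\cdot(q;q^2)_\infty^3(q^2;q^2)_\infty/(wq,w^{-1}q)_\infty$, as required.

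The main obstacle is the second equality. The first step is essentially bookkeeping around Rogers--Fine. For the second, the Appell--Lerch rewriting is routine, but identifying the outcome as the correct theta quotient is delicate: \eqref{311Rand} is phrased in base $q$ with $q^{2n+1}$-type exponents, whereas $K$ and $K_1$ carry both $(q;q^2)_n$-factors and $q^{n^2}$-exponents, so one must carefully track the mixed-base ($q$ versus $q^2$) structure and the interplay of the partial theta and Jacobi theta pieces. If the direct manipulation proves intractable, a fallback is to verify that both sides of the second equality satisfy the same first-order $q$-difference equation in $w$ under $w\mapsto wq^2$ and agree at a single value (e.g.\ at a convenient root of unity), reducing the identity to a finite check.
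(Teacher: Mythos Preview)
Your plan for the first equality $\kappa(w;q)=K_1(w;q)$ has a real gap. Applying Rogers--Fine in base $q^2$ with your parameters yields
\[
\kappa(w;q)=\sum_{n\ge 0}\frac{(wq^2,w^{-1}q^2;q^2)_n\,q^{2n^2+2n+1}\bigl(1-q^{4n+3}\bigr)}{(wq,w^{-1}q;q^2)_{n+1}},
\]
which does share the denominator of $K_1$, but whose numerator $(wq^2,w^{-1}q^2;q^2)_n$ depends on $w$, whereas the numerator $(-1)^n q^{(n+1)^2}(q;q^2)_n$ of $K_1$ does not. Hence the two sums cannot agree termwise, and neither your telescoping split of $1-q^{4n+3}$ nor an application of \eqref{122fine} removes this $w$-dependence. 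What is actually needed is a ${}_3\phi_2$ transformation: the paper obtains $\kappa=K_1$ in one stroke by specializing Sears's transformation (10.1) of \cite{sears} in base $q^2$ with $a=q^3/x$, $b=q$, $c=q^2$, $e=wq^3$, $f=w^{-1}q^3$ and letting $x\to 0$.

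Your plan for the second equality also breaks down at the combination step. The denominator of $K$ is $(wq^2,w^{-1}q^2;q^2)_n$, involving only even powers $wq^{2},wq^{4},\dots,wq^{2n}$, while that of $K_1$ is $(wq,w^{-1}q;q^2)_{n+1}$, involving only odd powers $wq,wq^{3},\dots,wq^{2n+1}$. These share no factors, so multiplying the $K$-summand by $(1-wq^{2n+1})(1-w^{-1}q^{2n+1})$ does \emph{not} produce the common denominator you need; the resulting ratio is not a $q$-Pochhammer symbol at all, and the partial-fraction/Appell--Lerch route does not get off the ground. The paper does not prove this identity directly but cites it as (5.3) of \cite{kang}. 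If you want a self-contained argument, your $q$-difference fallback (checking $w\mapsto wq^2$ and one initial value) is a more realistic strategy than the partial-fraction manipulation.
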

\begin{theorem}\label{thm:K1_q>1}
For $\abs{q}<1$ we have for $w\neq q^{2\ell+1}(\ell\in\N_0)$ with $|w|>1$
\begin{equation}\label{eqn:K1_q>1B}
\kappa\left(w;q^{-1}\right)
 = \frac{1}{1-w}\psi_2\left(w^{-1};q\right).
 \end{equation}
 If $w\neq 0$ and $w\neq q^{2\ell+1}(\ell\in\Z)$, we have
 \begin{equation}\label{eqn:K1_q>1A}
K_1\left(w;q^{-1}\right) =   \frac{w}{w-1} \left(\psi_2\left(w;q\right) +s_1(w;q)\right).
\end{equation}
Moreover if $w\neq 0$ and $w\neq q^{2\ell}(\ell\in\Z \setminus \{0\})$, we obtain
\begin{equation}\label{eqn:K_q>1}
 K\left(w;q^{-1}\right) = (1-w)\psi_2(w;q) + S_5(w;q).
\end{equation}
\end{theorem}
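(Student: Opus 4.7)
The common strategy, as in the proofs of Theorems \ref{thm:g3_q>1} and \ref{thm:g2_q>1}, is first to apply the inversion formula \eqref{randq} (now in base $q^2$) to each of $\kappa(w;q^{-1})$, $K_1(w;q^{-1})$, and $K(w;q^{-1})$, thereby obtaining $q$-series that converge for $|q|<1$, and then to invoke a suitable Fine-type or Ramanujan-type identity. For \eqref{eqn:K1_q>1B}, routine application of \eqref{randq} yields
\[
\kappa(w;q^{-1}) \;=\; -\frac{1}{w(1-w^{-1}q)}\, F(w^{-1},\, w^{-1}q;\, w^{-1},\, q^2),
\]
where $F(a,b;t,q^2)$ denotes the basic hypergeometric series in base $q^2$. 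I would then apply the Rogers--Fine identity \eqref{eqn:RogersFine} with $q$ replaced by $q^2$; the resulting series contains the factor $1 - w^{-2}q^{4n+2} = (1-w^{-1}q^{2n+1})(1+w^{-1}q^{2n+1})$, the first of which cancels $(w^{-1};q^2)_{n+1}/(w^{-1};q^2)_n$ in the denominator. What remains is $\tfrac{1-w^{-1}q}{1-w^{-1}}\sum_{n\geq 0} w^{-2n}q^{2n^2+n}(1+w^{-1}q^{2n+1})$, which re-indexes (via $m=2n$ on the even part and $m=2n+1$ on the odd part) to $\tfrac{1-w^{-1}q}{1-w^{-1}}\psi_2(w^{-1};q)$. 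Combining with the prefactor produces \eqref{eqn:K1_q>1B}.

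For \eqref{eqn:K1_q>1A}, the inversion (together with a shift of the summation index) gives
\[
K_1(w;q^{-1}) \;=\; \sum_{n\geq 0}\frac{q^{2n+1}(q;q^2)_n}{(wq,\, w^{-1}q;\, q^2)_{n+1}},
\]
which is precisely $(1-w^{-1})^{-1}$ times the left-hand side of Ramanujan's identity \eqref{311Rand} with $a=-w$. The right-hand side of \eqref{311Rand} then supplies $\psi_2(w;q)$ together with a theta-quotient correction $-(q)_\infty \sum_{n\geq 0}(-w)^{3n}q^{3n^2+n}(1-w^2q^{4n+2})\big/\sum_{m\in\Z}q^{m^2}(-w)^m$. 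To identify this correction with $-s_1(w;q)$, I would use Jacobi's triple product to write $\sum_{m\in\Z}q^{m^2}(-w)^m = (q^2;q^2)_\infty(wq,w^{-1}q;q^2)_\infty$, reducing the quotient of infinite products to $(q;q^2)_\infty/(wq,w^{-1}q;q^2)_\infty$, and then invoke \eqref{partialident} with $w\mapsto w^{-1}$ and $q\mapsto q^2$, which evaluates the false-theta sum as $-w^{-2}\psi_1(w;q^2)-w^{-1}$. Substituting these into the definition of $s_1(w;q)$ completes \eqref{eqn:K1_q>1A}.

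For \eqref{eqn:K_q>1}, the inversion produces $K(w;q^{-1}) = \sum_{n\geq 0}(q;q^2)_n q^{2n}/(wq^2,w^{-1}q^2;q^2)_n$. The plan is to apply Andrews's partial-theta identity \eqref{eqn:andrewsPartial} in base $q^2$ with the specialization $A\to 0$, $B=q$, $a=-w^{-1}$, $b=-w$. The second summand on the right-hand side of \eqref{eqn:andrewsPartial} becomes $(1-w)^2\sum_{m\geq 0}(wq^2;q^2)_m w^m/(wq;q^2)_{m+1}$, and the same Rogers--Fine manipulation used in the first paragraph (now applied to $F(w,wq;w,q^2)$) reduces it to $(1-w)\psi_2(w;q)$. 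The first summand becomes $w(q;q^2)_\infty/(wq^2,w^{-1}q^2;q^2)_\infty$ times
\[
\Phi(w;q) \;:=\; \sum_{m\geq 0}\frac{(-1)^m w^{2m}q^{m^2+m}}{(wq;q^2)_{m+1}},
\]
so it remains to prove $\Phi(w;q)=\psi_5(w;q)$. This is the main obstacle. My plan is to shift $m\mapsto n-1$ to rewrite $\Phi(w;q) = w^{-2}\bigl(1 - \sum_{n\geq 0}(-1)^n w^{2n}q^{n^2-n}/(wq;q^2)_n\bigr)$, then apply \eqref{by31} with $q\mapsto q^2$ and $a=-wq^{-1}$ to convert the inner sum into the false-theta series $\sum_{n\geq 0}(-1)^n w^{3n}q^{3n^2-2n}(1-w^2q^{4n})$. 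The $n=0$ term of this series cancels the leading $w^{-2}$, and re-organizing the remainder by the residue of $n$ modulo $3$ recovers $\psi_5(w;q)$ term-by-term.
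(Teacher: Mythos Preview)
Your strategy for \eqref{eqn:K1_q>1B} and \eqref{eqn:K1_q>1A} coincides with the paper's: inversion via \eqref{randq} followed by Rogers--Fine for $\kappa$, and inversion followed by Ramanujan's identity \eqref{311Rand} (plus Jacobi's triple product and \eqref{partialident}) for $K_1$. One small bookkeeping slip in your description of \eqref{eqn:K1_q>1B}: the factor $1-w^{-1}q^{2n+1}$ does not cancel $(w^{-1};q^2)_{n+1}/(w^{-1};q^2)_n=1-w^{-1}q^{2n}$; rather it cancels the factor coming from the ratio $(w^{-1}q;q^2)_n/(w^{-1}q^3;q^2)_n$. Your stated outcome $\tfrac{1-w^{-1}q}{1-w^{-1}}\sum_{n\ge0}w^{-2n}q^{2n^2+n}(1+w^{-1}q^{2n+1})$ is nonetheless correct and matches the paper.

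For \eqref{eqn:K_q>1} your route genuinely differs from the paper's. The paper, after the inversion step, quotes the ready-made Ramanujan identity $(3.14)_R$ of \cite{andrewsPartial}, which directly gives $(1-w)\psi_2(w;q)$ plus $\frac{w(q;q^2)_\infty}{(wq^2,w^{-1}q^2;q^2)_\infty}\sum_{n\ge0}(-1)^n w^{3n}q^{3n^2+2n}(1+wq^{2n+1})$, and the latter false-theta sum is immediately $\psi_5(w;q)$. You instead specialize the general Andrews identity \eqref{eqn:andrewsPartial}, which produces the same $\psi_2$-piece (after the Rogers--Fine step) but leaves the correction as $\Phi(w;q)=\sum_{m\ge0}(-1)^m w^{2m}q^{m^2+m}/(wq;q^2)_{m+1}$, and you must then prove $\Phi=\psi_5$ separately via \eqref{by31}. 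This works: with $q\mapsto q^2$ and $a=-wq^{-1}$ one obtains $\sum_{n\ge0}(-1)^n w^{2n}q^{n^2-n}/(wq;q^2)_n=\sum_{n\ge0}(-1)^n w^{3n}q^{3n^2-2n}(1-w^2q^{4n})$, and splitting $(1-w^2q^{4n})$ and shifting one index recovers $\psi_5$. Two phrasing quibbles: the $n=0$ term does not ``cancel $w^{-2}$'' but rather combines with it to give the constant $1$ (which is the $n=1$ term of $\psi_5$); and the final recombination is by an index shift, not by residues modulo $3$. Your approach is more self-contained (it uses only identities already stated in the paper) but longer; the paper's is shorter because $(3.14)_R$ already packages your $\Phi=\psi_5$ step.
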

\begin{proof}[Proof of Theorem \ref{thm:K1_q<1}]
The fact that the rightmost expression stated in (\ref{kident}) equals $K_1(w;q)$ may be found in (5.3) of \cite{kang}.  To prove that
$K_1(w;q) = \kappa(w;q)$, we modify the proof given in \cite{andrewsMordell} of (3.6)$_R$ to accommodate the parameter $w$.  We have, applying (10.1) of \cite{sears} with $q$ replaced by $q^2$, $a=q^3/x, b=q, c=q^2, e=wq^3, f=w^{-1} q^3$, and letting $x\to 0$.
\begin{align*}
K_1(w;q) &= \frac{q}{(1-wq)(1-w^{-1}q)} \ \lim_{x\to 0} \ \sum_{n\geq 0} \frac{\left(q^3/x,q,q^2;q^2\right)_n x^n}{(wq^3,w^{-1}q^3,q^2;q^2)_n} = \sum_{n\geq 0} \frac{q^{n+1} w^{-n}(wq^2;q^2)_n }{(wq;q^2)_{n+1}}.
\end{align*}
\end{proof}

\begin{proof}[Proof of Theorem \ref{thm:K1_q>1}]
To prove \eqref{eqn:K1_q>1B}, we find, using (\ref{randq}), that
\begin{align}\label{kapinv1}\kappa(w;\rho) =   \sum_{n\ge 0} \frac{w^{-n} \rho^{n+1} \left(w\rho^2;\rho^2\right)_n}{\left(w\rho;\rho^2\right)_{n+1}} = - \frac{1}{w}
 \sum_{n\ge 0} \frac{w^{-n}  (w^{-1}q^{2};q^{2})_n}{(w^{-1}q;q^{2})_{n+1}}.\end{align}
We next apply the Rogers-Fine identity \eqref{eqn:RogersFine} with $q\to q^2$, and $a=w^{-1} q^2, b= w^{-1} q^3,$ $t=w^{-1}$.  From this and (\ref{kapinv1}) we find that
\begin{align*}\kappa(w;\rho) &=  \frac{1}{1-w} \sum_{n\geq 0} q^{2n^2 + n} w^{-2n} \left(1+ q^{2n+1} w^{-1}\right) \\
&= \frac{1}{1-w}  \left(\sum_{n\in 2\mathbb N_0} q^{\frac{n(n+1)}{2}} w^{-n} +\sum_{n\in 2\mathbb N_0 + 1} q^{\frac{n(n+1)}{2}} w^{-n} \right)\\
&= \frac{1}{1-w} \psi_2\left(w^{-1};q\right).\end{align*}

Next, to prove \eqref{eqn:K1_q>1A},
we note that by (\ref{randq}),
\begin{align}\label{k1invert}K_1(w;\rho) = \sum_{n\ge 0} \frac{(-1)^n \rho^{(n+1)^2} (\rho;\rho^2)_{n}}{(w\rho, w^{-1}\rho, \rho^2)_{n+1}} =
\sum_{n\ge 0} \frac{q^{2n+1} (q;q^{2})_{n}}{(wq, w^{-1}q; q^2)_{n+1}}.\end{align}
We apply
(\ref{311Rand}) to see that the rightmost expression in (\ref{k1invert}) equals
$$\frac{w}{w-1}\left(\sum_{n\geq 0} w^n q^{\frac{n(n+1)}{2}} - \frac{(q)_\infty \sum_{n\geq 0} (-1)^n w^{3n} q^{3n^2 + n} \left(1-w^2 q^{4n+2}\right)}{\sum_{n\in \mathbb Z} (-1)^n w^n q^{n^2}}\right).$$
 Equation \eqref{eqn:K1_q>1A} now follows from $(q)_\infty/(q^{2};q^2)_\infty = (q;q^2)_\infty$, \eqref{partialident}, and the identity
 \begin{align*}
\sum_{n\in \Z} (-1)^nw^n q^{n^2} &= \left(q^{2};q^2\right)_\infty\left(wq;q^{2}\right)_\infty \left(w^{-1}q;q^{2}\right)_\infty,
\end{align*}
which follows from the Jacobi triple product identity (see \cite{onobook} for example).

Finally, to prove (\ref{eqn:K_q>1}), we have again using (\ref{randq})
$$K(w;\rho) = \sum_{n\ge 0} \frac{(-1)^n \rho^{n^2} (\rho;\rho^2)_{n}}{(w\rho^2, w^{-1}\rho^2; \rho^2)_n} =
\sum_{n\ge 0} \frac{q^{2n} (q;q^{2})_n}{(wq^2, w^{-1}q^2; q^2)_n}.$$  To this we apply (3.14)$_R$ of \cite{andrewsPartial} to find that
$$K(w;\rho) = (1-w)\psi_2(w;q) + \frac{w (q;q^2)_\infty}{(wq^2, w^{-1}q^2; q^2)_\infty} \sum_{n\geq 0} (-1)^n w^{3n} q^{3n^2 + 2n} \left(1+wq^{2n+1}\right).$$
It is straightforward to see that
$$ \sum_{n=0}^\infty (-1)^n w^{3n} q^{3n^2+2n} \left(1+wq^{2n+1}\right) = \sum_{n\ge 0} \(\frac{n}{3}\)
(-w)^{n-1} q^{\frac{n^2-1}{3}},$$ and the result now follows.
\end{proof}

\begin{remark}
We note that $\psi_2 (w;q) + w^{-1} \psi_2 \left(w^{-1}; q\right) = \sum_{n\in\Z} w^n q^{\frac{n(n+1)}{2}}$ is a Jacobi theta function.
\end{remark}

\section*{Acknowledgements}
The authors thank Jeremy Lovejoy for several insightful comments.  The first author was partially supported by the Alfried Krupp-prize, and the second author is grateful for the support of National Science Foundation grant DMS-1049553.

%%%%%%%%%%%%%%%%
%%   BIBILIOGRAPHY          %%
%%%%%%%%%%%%%%%%

\end{document}